\theoremstyle{definition}
\newtheorem{Def}{Definition}
\newtheorem{Ex}{Example}
\newtheorem{Rem}{Remark}
\newtheorem*{MainProb}{Main Problem}
\newtheorem{Cor}{Corollary}
\newtheorem{Prop}{Proposition}
\newtheorem{Thm}{Theorem}
\newtheorem{Fact}{Fact}
\begin{document}
\title[Reeb spaces and structures on preimages of regular values]{Investigating additional structures on preimages of regular values of smooth maps induced from the manifolds via the Reeb spaces and applications}

\author{Naoki Kitazawa}
\keywords{Singularities of differentiable maps. Reeb spaces. Spin structures. Framed manifolds. Cohomology classes. Differential topology of manifolds. \\
\indent {\it \textup{2020} Mathematics Subject Classification}: Primary~57R45. Secondary~57R19.}
\address{Institute of Mathematics for Industry, Kyushu University, 744 Motooka, Nishi-ku Fukuoka 819-0395, Japan\\
 TEL (Office): +81-92-802-4402 \\
 FAX (Office): +81-92-802-4405 \\
}
\email{n-kitazawa@imi.kyushu-u.ac.jp}
\urladdr{https://naokikitazawa.github.io/NaokiKitazawa.html}
\maketitle
\begin{abstract}
The {\it Reeb space} of a generic map is the space of all connected components of preimages of the map. Reeb spaces are fundamental and useful tools in the theory of Morse functions and higher dimensional variants and their applications to geometry. These spaces are polyhedra compatible with 
natural simplicial structures of the manifolds in considerable cases. For example, for Morse functions, {\it fold} maps, and proper {\it stable} maps, which are useful and fundamental tools in these studies, they are polyhedra whose dimensions are equal to those of the manifolds of the targets.
 
In the 2010s, Hiratuka and Saeki found that the top-dimensional homology group of the Reeb space does not vanish for smooth maps such as proper stable maps having some preimages containing a component which is not (oriented) null-cobordant and which has no singular points. Recently the author found some extended versions of this fact formulating and using cobordism-like groups of equivalence classes of smooth and closed manifolds. 
 
The present paper concerns cases where manifolds and preimages with no singular points of maps may have additional algebraic topological or differential topological structures other than differentiable structures and orientations. We show
a theorem for this case. We also give some explicit applications of the result where the preimages have spin or spin$^c$ structures induced from the manifolds of the domains and structures of framed manifolds for example.
As important applications, we present new explicit construction of spin cobordisms and spin$^c$ cobordisms for example.
\end{abstract}

\section{Introduction.}
\label{sec:1}
Singularity theory and geometric theory of {\it generic} smooth maps such as {\it stable} maps and applications to geometric studies of smooth manifolds have developed since the former half of the 1900s. Later we will introduce some terminologies on smooth maps we need. See \cite{golubitskyguillemin} for generic smooth maps including stable maps and their singular points for more systematic explanations. Note also that this sophisticated theory contains fundamental theory of Morse functions as a very fundamental study.

In the present paper, we study about a result on a relation between homology groups of the {\it Reeb spaces} of such maps and classes or types of preimages of regular values by Saeki and Hiratuka \cite{hiratukasaeki} and its generalization by the author \cite{kitazawa3}. The {\it Reeb space} of a continuous map is the space consisting of all connected components of  preimages of the map. Reeb spaces are fundamental and important tools in the studies, representing the manifold compactly. See \cite{reeb} as a pioneering study for example, We define the Reeb space of a map in section \ref{sec:3}.

We introduce definitions, fundamental properties and results related to generic maps and Reeb spaces.

For a polyhedron, its triangulation is taken as one giving the structure of the polyhedron. A topological space homeomorphic to a $1$ or $2$-dimensional polyhedron is known to have the unique structure of a polyhedron.
A differentiable manifold is known to be a polyhedron defined in a canonical way uniquely. A $k$-dimensional topological manifold is known to have the structure of a polyhedron and that of a differentiable manifold uniquely for $k=1,2,3$.
These polyhedra of differentiable manifolds form a subclass of the class of PL manifolds. If a PL manifold $X$ is oriented, then each simplex of degree $\dim X$ is oriented in a canonical way.

On uniqueness of the structures of polyhedra and differentiable manifolds for low-dimensional cases, see \cite{moise} for example.

The PL category is known to be equivalent to the piecewise smooth category. These categories are important in the present paper. We use terminologies and notions of the PL category in the present paper where it may be better to adopt terminologies and notions in the piecewise smooth category.

\begin{Def}
\label{def:1}
Let $X$ and $Y$ be polyhedra. A continuous map $c:X \rightarrow Y$ is said to be {\it triangulable} if there exists a pair of triangulations of $X$ and $Y$ and homeomorphisms $({\phi}_X,{\phi}_Y)$ onto $X$ and $Y$ respectively such that the composition ${{\phi}_Y}^{-1} \circ c \circ {{\phi}_X}$ is a simplicial map with respect to the given triangulations. We also say that $c$ is triangulable {\it with respect to $({\phi}_X,{\phi}_Y)$}.
\end{Def}

We give an explanation on terminologies and notation on manifolds, maps between manifolds and bundles whose fibers are manifolds. 

${\mathbb{R}}^k$ denotes the $k$-dimensional Euclidean space where ${\mathbb{R}}^1$ means the line $\mathbb{R}$.
This is naturally regarded as a smooth manifold and endowed with a standard metric. $||x|| \geq 0$ denotes the distance between $x \in {\mathbb{R}}^k$ and the origin $0$ or the value of the standard Euclidean norm at $x$.
$\mathbb{Z} \subset \mathbb{R}$ denotes the integer ring. This consists of all integers.
$S^k:=\{x \in {\mathbb{R}}^{k+1} \mid ||x||=1.\}$ is the $k$-dimensional unit sphere for an integer $k \geq 0$. This is a $k$-dimensional smooth closed and connected submanifold with no boundary.
$D^k:=\{x \in {\mathbb{R}}^k \mid ||x|| \leq 1.\}$ is the $k$-dimensional unit disk for an integer $k \geq 1$. This is a $k$-dimensional smooth compact and connected manifold.

A manifold homeomorphic to ${\mathbb{R}}^k$, $S^k$ or $D^k$ may not be diffeomorphic to this. However, this is diffeomorphic to ${\mathbb{R}}^k$ for $k \neq 4$ in the case ${\mathbb{R}}^k$, diffeomorphic to $S^k$ for $k=1,2,3,5,6$ in the case $S^k$ and diffeomorphic to $D^k$ for $k \neq 4,5$ in the case $D^k$. In the previous cases, the structures of polyhedra are known to be unique and independent of the differentiable structures.

A {\it singular point} of a differentiable map $c:X \rightarrow Y$ is a point $p \in X$ at which the rank of the differential ${dc}_p$ drops or the dimension of the image of the differential is smaller than $\min\{\dim X, \dim Y\}$.
The {\it singular set} $S(c)$ of a smooth map is defined as the set of all singular points of the map and the {\it singular value set} of the map is defined as the image of the singular set. The {\it regular value set} of the map is the complementary set of the singular value set.

A {\it diffeomorphism} on a smooth manifold means a $C^{\infty}$ homeomorphism on the smooth manifold. The {\it diffeomorphism group} of the manifold is the group of all diffeomorphisms.

We introduce fundamental notions on bundles assuming fundamental notions such as {\it base spaces}, {\it fibers}, {\it structure groups} and maps (morphisms) between bundles with same structure groups. We omit more precise explanations on fundamental notions and properties on bundles. See \cite{milnorstasheff} and \cite{steenrod} for example. 

A bundle whose fiber is a smooth manifold is a {\it smooth} bundle if the structure group is (a subgroup of) the diffeomorphism group.
A bundle whose fiber is a polyhedron is a {\it PL} bundle if the structure group consists of PL homeomorphisms.
A smooth bundle is said to be a {\it linear} bundle if the fiber is a Euclidean space, a unit sphere or a unit disk and the structure group acts on the fiber linearly in a canonical way. If the fiber is diffeomorphic to ${\mathbb{R}}^k$, then it is regarded as a {\it real vector bundle} and the dimension is $k$. 
Moreover, in some specific case, we can naturally regard this as a {\it complex vector bundle} of dimension $\frac{k}{2}$ where $k$ must be even. 

The tangent bundle of a smooth manifold $X$ is denoted by $TX$. 
This is regarded as a real vector bundle of dimension $\dim X$ whose base space is $X$. 

Sections of vector bundles are said to be {\it independent} if at each point in the base space, the values are linearly independent.

A real vector bundle or more generally, a linear bundle whose structure group is the $k$-th orthgonal group is {\it orientable} if the structure group is reduced to the $k$-th rotation group. This bundle can be {\it oriented} in two ways by regarding as linear bundles whose structure groups are the rotation group. These oriented bundles may be mutually {\it equivalent} as bundles. Here two isomorphic bundles over a same base space with a fixed structure group is {\it equivalent} if some isomorphism between the two bundles induce the identity map on the base space.

For a bundle $B$ over $X$, the restriction of the bundle to $Y \subset X$ is denoted by $B {\mid}_{Y}$.

A {\it proper} map is a map between topological spaces such that the preimage of a compact subspace is always compact.

The following fact is a fact on {\it stable} maps and the class of {\it Thom} maps, which is wider than the class of stable maps. We will give an explanation on stable maps in the next section again. 

\begin{Fact}[\cite{shiota}]
\label{fact:1}
{\rm (}Proper{\rm )} stable maps and more generally, proper Thom maps are always triangulable {\rm (}with respect to pairs of homeomorphisms giving the canonical triangulations of the smooth manifolds{\rm )}.
\end{Fact}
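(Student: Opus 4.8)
The plan is to reduce the statement to the triangulability theory for Whitney stratified maps, since a Thom map comes equipped with Whitney stratifications of its source and target for which it is a stratified submersion on each stratum and satisfies Thom's $a_f$ regularity condition. First I would recall that every stable map, and more generally every Thom map, admits such compatible Whitney stratifications; this places the problem squarely within Thom--Mather theory. The goal then becomes: produce triangulations of the (canonically stratified) source and target, compatible with these stratifications, for which the map is simplicial in the sense of Definition \ref{def:1}.

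Second, I would invoke the Thom--Mather isotopy theorems. The first isotopy theorem yields local triviality of the target stratification, and the second isotopy theorem---whose proof crucially exploits the $a_f$ condition---yields local triviality of the map over each stratum of the target (properness being used here exactly as it is for proper stable maps). Concretely, over a sufficiently small neighbourhood in a stratum of the target the map is, after a stratum-preserving homeomorphism, a product projection whose fibre is a Whitney stratified set carrying the conical structure of the relevant link. This controlled, locally conical structure, together with a controlled system of tubular neighbourhoods, is the engine of the whole construction.

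Third, I would build the two triangulations by induction on the depth (equivalently, on the dimension) of the strata, working upward from the lowest strata. I would triangulate the target first, using the classical fact that a Whitney stratified set admits a triangulation compatible with its stratification, refined so that each open stratum is a union of open simplices. Then I would lift the triangulation to the source: over each target simplex the map is a trivial stratified bundle, so I would triangulate one fibre, spread this triangulation across the simplex using the product (prism) triangulation of $\sigma \times F$ chosen so that the projection to $\sigma$ is itself simplicial, and glue to the triangulations already fixed on faces by means of the conical structure of the link.

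The hard part will be the compatibility of these local lifts across the stratum boundaries in the target, that is, guaranteeing that the fibrewise triangulations chosen over adjacent simplices, and over simplices of different dimensions meeting along common faces, assemble into a single global triangulation of the source under which the map is honestly simplicial. This is precisely where the uniformity supplied by the $a_f$ condition and the controlled tube systems becomes indispensable: it forces the local trivializations over neighbouring strata to agree up to controlled isotopy, so that the inductive gluing can be made to close up coherently at each step. Pushing this induction through in full---which is the technical heart of Shiota's argument---yields triangulations realizing the canonical smooth triangulations of the source and target, as claimed.
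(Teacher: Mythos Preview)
The paper does not give its own proof of Fact~\ref{fact:1}: it is stated as a \emph{Fact} with a citation to \cite{shiota} and nothing more. There is therefore no argument in the paper to compare your proposal against.

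That said, your sketch is a faithful outline of the strategy behind Shiota's theorem (and of the earlier, less complete approaches of Thom and Verona): pass to a Whitney stratification satisfying the $a_f$ condition, use the Thom--Mather isotopy theorems to obtain controlled local triviality of the map over strata of the target, and then build compatible triangulations by induction on strata depth, with the gluing over adjacent strata handled via controlled tube systems. The place you correctly flag as delicate---coherent assembly of the fibrewise triangulations across stratum boundaries---is indeed the technical core of \cite{shiota}, and is not something one can wave through in a paragraph. So as a roadmap your proposal is sound, but it is a summary of the cited reference rather than a substitute for the paper's own (nonexistent) proof.
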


\begin{Fact}[\cite{hiratukasaeki} and \cite{hiratukasaeki2}]
\label{fact:2}
For a triangulable map $c:X \rightarrow Y$ with respect to $(\phi_X,\phi_Y)$, the Reeb space $W_c$ is a polyhedron given by a homeomorphism ${\phi}_c$ from a polyhedron and two maps $q_c:X \rightarrow W_c$  and $\bar{c}:W_c \rightarrow Y$ are triangulable maps with respect to the corresponding pairs of the homeomorphisms.
\end{Fact}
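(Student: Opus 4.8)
The plan is to reduce everything to a purely simplicial statement and then to build the triangulation of $W_c$ directly from the combinatorics of the fibers. By Definition~\ref{def:1} we may replace $c$ by the simplicial map $f := {\phi_Y}^{-1}\circ c\circ \phi_X\colon |K|\to |L|$ between the simplicial complexes $K$, $L$ carrying the given triangulations; since $q_c$ and $\bar c$ are defined purely from the fiber-component equivalence relation, which is transported by the homeomorphisms $\phi_X,\phi_Y$, it suffices to produce a triangulation of the quotient $W_f=|K|/\!\sim$ (where $x\sim x'$ iff $f(x)=f(x')$ and $x,x'$ lie in the same connected component of $f^{-1}(f(x))$) together with subdivisions making $q_f\colon |K|\to W_f$ and $\bar f\colon W_f\to |L|$ simplicial. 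I would assume $f$ proper (equivalently, in the locally finite setting, that each fiber has finitely many connected components), which holds in the intended applications to proper stable and Thom maps via Fact~\ref{fact:1}.

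The first key step is a local-triviality analysis over open simplices. For a simplex $\tau\in L$ with barycenter $b_\tau$, rescaling barycentric coordinates gives a canonical PL homeomorphism $f^{-1}(\operatorname{int}\tau)\cong \operatorname{int}\tau\times F_\tau$ commuting with the projection to $\operatorname{int}\tau$, where $F_\tau:=f^{-1}(b_\tau)$ is a compact subpolyhedron. Consequently the number of connected components of the fibers is constant along $\operatorname{int}\tau$, equal to $n_\tau:=|\pi_0(F_\tau)|$, and $q_f$ maps $f^{-1}(\operatorname{int}\tau)$ onto $\operatorname{int}\tau\times\pi_0(F_\tau)$. The second step records how these local pieces fit together: for a face $\tau'<\tau$ and a point approaching $\operatorname{int}\tau'$ from $\operatorname{int}\tau$, the closure of each component of a fiber over $\operatorname{int}\tau$ meets $f^{-1}(\operatorname{int}\tau')$ in a connected set, hence lies in a single component there. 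This yields well-defined \emph{merging maps} $\mu_{\tau\tau'}\colon \pi_0(F_\tau)\to\pi_0(F_{\tau'})$, compatible under composition along chains of faces.

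I would then assemble $W_f$ as the geometric realization of the diagram $\tau\mapsto\pi_0(F_\tau)$ over the face poset of $L$: take the disjoint union $\bigsqcup_{\tau\in L}(\tau\times\pi_0(F_\tau))$ and glue $\tau'\times\{\mu_{\tau\tau'}(j)\}$ to the corresponding face of $\tau\times\{j\}$. The projection $(\tau,j)\mapsto\tau$ realizes $\bar f$ as a (non-degenerate) simplicial map onto $|L|$, and $q_f$ becomes the tautological map sending a point in component $j$ over $\operatorname{int}\tau$ to $(\tau,j)$; both are simplicial for this structure after passing to a common subdivision (e.g.\ barycentric subdivisions of the pieces indexed by the same $\tau$, refined so that each sheet $\operatorname{int}\tau\times\{j\}$ is a subcomplex and each preimage $q_f^{-1}(\tau\times\{j\})$ is a subcomplex of $K$). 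The homeomorphism $\phi_c$ is then the composite identifying $W_c$ with the realization of this complex.

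The main obstacle is the geometric-topological verification that the glued object is genuinely a polyhedron, i.e.\ that the colimit just described is Hausdorff, locally finite and carries a bona fide simplicial structure rather than merely a set-theoretic quotient. Concretely I expect the delicate points to be: (i) showing that distinct components of nearby fibers are not identified in the limit except as dictated by the maps $\mu_{\tau\tau'}$, which rests on the fibers being tame subpolyhedra with finitely many components and on properness; and (ii) choosing the subdivisions coherently across all simplices of $L$ so that $q_f$ and $\bar f$ are \emph{simultaneously} simplicial, which is where the combinatorics of how sheets merge along faces must be organized (this is exactly the content that may force a subdivision of $K$ and $L$ beyond the given triangulations). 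Once the quotient is known to be Hausdorff and the merging maps are shown to be simplicial incidence data, the identification of $W_c$ as a polyhedron and the triangulability of $q_c$ and $\bar c$ follow formally.
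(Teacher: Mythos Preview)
The paper does not contain a proof of Fact~\ref{fact:2}; it is quoted as a known result with attribution to \cite{hiratukasaeki} and \cite{hiratukasaeki2}, so there is no in-paper argument to compare your proposal against. Your outline is along the lines of the standard argument and is close in spirit to what those references do: use the combinatorics of a simplicial map to see that the fiber is PL-constant over each open simplex of the target, record how fiber components merge as one passes to faces, and realize $W_f$ as the complex obtained by taking one copy of each target simplex for each fiber component over it, glued according to the merging data.

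One point to tighten: your formulation of the merging maps (``the closure of each component of a fiber over $\operatorname{int}\tau$ meets $f^{-1}(\operatorname{int}\tau')$ in a connected set'') is not quite the statement you need, and as written it is not obviously true. What you actually want is that the local trivialization over $\operatorname{int}\tau$ extends to a trivialization over the half-open join $\operatorname{int}\tau' \ast \operatorname{int}\tau$ (equivalently, over the open star of $\tau'$ in the barycentric subdivision restricted to $\tau$), so that a component of $F_\tau$ is carried by the limiting process into a \emph{single} component of $F_{\tau'}$. This is most cleanly obtained by first passing to the barycentric subdivision of $K$ and $L$, after which each simplex of $K$ maps onto a simplex of $L$ with vertex map a monotone surjection of chains; the fiber structure and the merging maps then become transparent and the functoriality $\mu_{\tau'\tau''}\circ\mu_{\tau\tau'}=\mu_{\tau\tau''}$ is automatic. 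With that adjustment your plan goes through, and the obstacles you flag (Hausdorffness of the colimit, coherent subdivisions making $q_f$ and $\bar f$ simultaneously simplicial) are exactly the places where the cited papers spend their effort.
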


For triangulable smooth maps, let the corresponding triangulations of the manifolds be the ones giving the structures of the underlying PL manifolds unless otherwise stated. The following result is a key fact in the present paper. For an integer $k>0$, the $k$-dimensional smooth oriented cobordism group is denoted by ${\Omega}_k$.

\begin{Fact}[\cite{hiratukasaeki2}]
\label{fact:3}
Let $m>n$ be positive integers. Let $M$ be a closed and smooth manifold of dimension $m$ and $N$ a smooth manifold of dimension $n$ with no boundary. Let $f:M \rightarrow N$ be a triangulable and smooth map. {\rm (}Let the manifolds $M$ and $N$ be oriented.{\rm )} If for a regular value $a$, there exists a connected component of $f^{-1}(a)$ which is not {\rm (}resp. oriented{\rm )} null-cobordant, then for the Reeb space $W_f$, the homology group $H_n(W_f;\mathbb{Z}/2\mathbb{Z})$ {\rm (}resp. $H_n(W_f;{\Omega}_{m-n})${\rm )} is not trivial. 
\end{Fact}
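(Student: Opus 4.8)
The plan is to build, directly on the canonical triangulation of $W_f$ furnished by Fact \ref{fact:2}, an explicit top-dimensional cycle whose coefficients are the cobordism classes of the fibre components, and then to detect it locally at the point representing the non-null-cobordant component. Throughout, $q_f\colon M\to W_f$ and $\bar f\colon W_f\to N$ are simplicial, $\dim W_f=n$, and (after restricting to the interior of $N$ if necessary) the set $V\subset N$ of regular values is open with complement a subpolyhedron of dimension at most $n-1$. Over $V$ the map $f$ is a proper submersion, so by Ehresmann's theorem $f^{-1}$ restricts to a fibre bundle, and for each $n$-simplex $\sigma$ of $W_f$ lying over $V$ the set $q_f^{-1}(\operatorname{int}\sigma)$ is a trivial bundle over $\operatorname{int}\sigma$ whose fibre is a single connected closed $(m-n)$-manifold $P_\sigma$; in the oriented case $P_\sigma$ inherits an orientation from those of $M$ and $N$. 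I set $\lambda_\sigma:=[P_\sigma]$ in the oriented cobordism group $\Omega_{m-n}$ (resp.\ in the unoriented cobordism group $\mathfrak{N}_{m-n}$ in the unoriented case) and form the chain $z:=\sum_\sigma \lambda_\sigma\,\sigma$ with coefficients in that cobordism group, the sum running over the $n$-simplices of $W_f$; note that $\lambda_\sigma\,\sigma$ is independent of the chosen orientation of $\sigma$, since reversing it reverses the induced fibre orientation.

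The central step is to verify $\partial z=0$, which I would check on each $(n-1)$-simplex $\tau$. If $\bar f(\tau)\subset V$, then $\tau$ lies in the oriented $n$-manifold part $\bar f^{-1}(V)$ and is an interior face shared by exactly two top simplices $\sigma_1,\sigma_2$; over the connected regular region the fibre component is constant, so $\lambda_{\sigma_1}=\lambda_{\sigma_2}$, and since $\sigma_1,\sigma_2$ induce opposite orientations on $\tau$ the two incidence contributions cancel. If instead $\bar f(\tau)$ lies in the singular image, I would take a short arc in $N$ meeting this wall transversally in one point and read off the fibre components on the simplices incident to $\tau$: the preimage of such an arc realises a cobordism between the relevant fibres inside $M$, so the signed sum of the classes $\lambda_\sigma$ over simplices incident to $\tau$ is the class of a closed $(m-n)$-manifold that bounds. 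Concretely, at a definite fold the vanishing fibre is a sphere, which is null-cobordant, and at an indefinite fold (or a more general surgery) one side is obtained from the other by surgery, giving an additive relation $[P]=\sum_i[P_i]$ that forces the contribution at $\tau$ to vanish. Carrying out this wall analysis uniformly for every singular stratum of a general triangulable map is the step I expect to be the main obstacle: it is transparent when local normal forms are available (fold or stable maps), but in general it rests on the cobordism invariance of the fibres of the proper map over transverse arcs together with control of how $W_f$ branches along the singular image.

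Granting $\partial z=0$, it remains to show $[z]\neq 0$. Let $P_1$ be the non-null-cobordant component over $a$, choose a point $w_1$ in the interior of a top simplex $\sigma_1$ of $W_f$ with $P_{\sigma_1}=P_1$ (such $\sigma_1$ exists because a whole ball of regular values surrounds $a$, over which the component $P_1$ persists), and consider the map to the local homology at the manifold point $w_1$,
\[
H_n(W_f;G)\longrightarrow H_n\bigl(W_f,\,W_f\setminus\{w_1\};G\bigr)\cong H_n(\sigma_1,\partial\sigma_1;G)\cong G ,
\]
where $G$ denotes the coefficient group. The image of $[z]$ is $\lambda_{\sigma_1}=[P_1]$. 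In the oriented case $G=\Omega_{m-n}$ and $[P_1]\neq 0$ by hypothesis, so $H_n(W_f;\Omega_{m-n})\neq 0$. In the unoriented case I first run the same construction with coefficients in $\mathfrak{N}_{m-n}$; since $P_1$ is not null-cobordant, a suitable Stiefel–Whitney number provides a homomorphism $\phi\colon\mathfrak{N}_{m-n}\to\mathbb{Z}/2\mathbb{Z}$ with $\phi([P_1])\neq 0$, and applying $\phi$ to the coefficients of $z$ yields a cycle in $C_n(W_f;\mathbb{Z}/2\mathbb{Z})$ whose image in the local homology at $w_1$ is $\phi([P_1])\neq 0$, whence $H_n(W_f;\mathbb{Z}/2\mathbb{Z})\neq 0$.
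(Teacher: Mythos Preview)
The paper does not itself prove Fact~\ref{fact:3} (it is cited from \cite{hiratukasaeki2}), but its proof of the generalization Theorem~\ref{thm:1} follows exactly your template: assign to each top simplex of the Reeb space the cobordism class of its fibre, verify the resulting chain is a cycle because the fibres over the $n$-simplices sharing any $(n-1)$-face together bound (the paper defers this step to Lemma~3.1 of \cite{hiratukasaeki2}, which is precisely the ``wall analysis'' you flag as the main obstacle), and read off nonvanishing at the distinguished simplex. Your use of local homology to certify nonvanishing and the Stiefel--Whitney-number homomorphism to pass to $\mathbb{Z}/2\mathbb{Z}$ in the unoriented case are more explicit than the paper's sketch, but the underlying strategy is the same.
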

The author extended the result in \cite{kitazawa}, generalizing the coefficient modules as groups consisting of cobordism-like equivalence classes of (oriented) differentiable manifolds. The present paper also concerns discovering variants of Fact \ref{fact:3}. As a new study, we try to find applications to geometry of differentiable manifolds and their submanifolds which may be equipped with additional structures. We concentrate on spin and spin$^c$ structures and framed submanifolds. We present our Main Problem.

\begin{MainProb}
Find variants of Fact \ref{fact:3}. Mainly, investigate restrictions on preimages of regular values with additional algebraic topological or differential topological structures, parametrized by elements of a module module over a suitable coefficient ring $R$ associated with the manifold $M$ and {\rm (}$m-n${\rm )}-dimensional smooth closed submanifolds with no boundaries. Then find applications.
\end{MainProb}


The present paper is organized as follows.

First, we review definitions and fundamental properties for Morse functions, {\it fold} maps and {\it stable} maps.
Next, we introduce {\it pseudo quotient spaces} and {\it pseudo quotient} maps, which are regarded as
generalizations of Reeb spaces and maps onto the Reeb spaces canonically obtained from given smooth maps. Note that {\it pseudo quotient spaces} were first introduced by Kobayashi and Saeki in 1996 to study algebraic topological and differential topological properties of stable maps into the plane on closed manifolds whose dimensions are greater than $2$
(\cite{kobayashisaeki}).

Last, we introduce a main result as Theorem \ref{thm:1}. This gives a new extension of Fact \ref{fact:3}. We present applications as Theorems \ref{thm:2.1}--\ref{thm:3} and they are also main results and answers to Main Problem. 
  
Hereafter, $M$ is a smooth and closed manifold of dimension $m \geq 1$, $N$ is a smooth manifold of dimension $n$ with no boundary satisfying the relation $m \geq n \geq 1$ and $f$ is a smooth map from $M$ into $N$.

\section{Morse functions, fold maps and stable maps.}
\label{sec:2}
{\it Stable} maps are essential smooth maps in higher dimensional versions of the theory of Morse functions. As simplest examples, Morse functions and their higher dimensional versions, {\it fold} maps, are stable if and only if their restrictions to the singular sets are in some senses {\it generic}. These singular sets are closed submanifolds with no boundaries as Proposition \ref{prop:1} explains. We omit rigorous explanations on {\it generic} smooth maps. A Morse function is stable if and only if at each distinct singular points, the values are distinct. Stable Morse functions exist densely and stable maps exist densely on a smooth and closed manifold if the pair of the dimensions of the manifold of the domain and the target is {\it nice}. If for the pair of dimensions, both dimensions are smaller than $7$, then the pair is nice for example. Here we regard the space of all smooth maps between two smooth manifolds without boundaries as the space endowed with the so-called {\it Whitney $C^{\infty}$ topology}. See \cite{golubitskyguillemin} for such theory.

We introduce the definition and a fundamental property of a {\it fold} map. See also \cite{saeki} for example for introductory facts and advanced studies on fold maps. 
A  {\it fold} map from an $m$-dimensional manifold with no boundary into an $n$-dimensional manifold with no boundary with $m>n \geq 1$ is a smooth map at each singular point $p$ of which it is represented by the form $$(x_1,\cdots,x_m) \mapsto (x_1,\cdots,x_{n-1},\sum_{k=n}^{m-i(p)}{x_k}^2-\sum_{k=m-i(p)+1}^{m}{x_k}^2)$$ for suitable coordinates and a suitable integer $i(p)$ satisfying $0 \leq i(p) \leq \frac{m-n+1}{2}$.

\begin{Prop}
\label{prop:1}
The integer $i(p)$ is taken as a non-negative integer smaller than or equal to $\frac{m-n+1}{2}$ uniquely and we call $i(p)$ the {\rm index} of $p$. The set of all singular points of a fixed index is a smooth closed submanifold of dimension $n-1$ with no boundary and the restriction of the fold map to the singular set is a smooth immersion as introduced before.  
\end{Prop}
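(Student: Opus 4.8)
The plan is to work entirely within the local normal form furnished by the definition of a fold map and to read off all three assertions from it. Fix a singular point $p$, together with coordinates $(x_1,\dots,x_m)$ centred at $p$ and $(y_1,\dots,y_n)$ centred at $f(p)$, in which $f$ has the stated form; write $Q(x_n,\dots,x_m)=\sum_{k=n}^{m-i(p)}{x_k}^2-\sum_{k=m-i(p)+1}^{m}{x_k}^2$ for the quadratic part, a nondegenerate quadratic form in the $m-n+1$ variables $x_n,\dots,x_m$ which are exactly the coordinates spanning $\ker df_p$.

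For the uniqueness of $i(p)$, I would first observe that the germ of $f$ at $p$ determines $Q$ only up to a change of the source variables $x_n,\dots,x_m$ and up to a change of the target coordinate $y_n$. By Sylvester's law of inertia the signature of $Q$, and in particular the number $i(p)$ of negative squares, is invariant under source coordinate changes. The only remaining ambiguity comes from the target: replacing $y_n$ by $-y_n$ sends $Q$ to $-Q$ and hence sends $i(p)$ to $(m-n+1)-i(p)$. Requiring $0\le i(p)\le\frac{m-n+1}{2}$ selects the smaller of these two values and thereby pins $i(p)$ down uniquely; this is precisely the role of the inequality in the definition. I expect this disambiguation to be the only genuinely delicate point of the argument.

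To identify the singular set and its local structure, I would compute the Jacobian of $f$ in the normal form. Its first $n-1$ rows are those of the identity, while its last row is $(0,\dots,0,\partial Q/\partial x_n,\dots,\partial Q/\partial x_m)$ with each $\partial Q/\partial x_k=\pm 2x_k$. Since the last row is supported on the coordinate block complementary to that of the first $n-1$ rows, the differential fails to be surjective exactly when $\partial Q/\partial x_k=0$ for all $k$, i.e.\ when $x_n=\dots=x_m=0$. Thus near $p$ the singular set is the coordinate plane $\{x_n=\dots=x_m=0\}$, a smooth submanifold of dimension $n-1$. Because the index is literally constant (equal to $i(p)$) throughout this chart, and because the eigenvalues of a nondegenerate Hessian cannot cross zero along $S(f)$, the function $i(\cdot)$ is locally constant on the singular set; hence the set of singular points of a fixed index is open and closed in $S(f)$ and inherits the structure of a smooth $(n-1)$-dimensional submanifold.

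Finally, for the immersion claim I would restrict $f$ to the singular set $\{x_n=\dots=x_m=0\}$, parametrised by $(x_1,\dots,x_{n-1})$. Since $Q$ vanishes there, the restriction is $(x_1,\dots,x_{n-1})\mapsto(x_1,\dots,x_{n-1},0)$, whose differential has rank $n-1$ and is therefore injective; so the restriction is an immersion. Patching these local charts, all three statements hold globally.
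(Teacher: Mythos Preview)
Your argument is correct and is in fact the standard proof of this classical fact. The paper, however, does not prove Proposition~\ref{prop:1} at all: it is stated as a known property of fold maps (with a pointer to \cite{saeki} and \cite{golubitskyguillemin} for background), and the text moves on immediately. So there is nothing to compare against; you have supplied a proof where the paper supplies none.

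One small remark on your uniqueness argument: the sentence ``the germ of $f$ at $p$ determines $Q$ only up to a change of the source variables $x_n,\dots,x_m$ and up to a change of the target coordinate $y_n$'' is a slight over-simplification, since a priori one allows arbitrary local diffeomorphisms of source and target. The clean intrinsic formulation is that the Hessian of $y_n\circ f$ on $\ker df_p$, viewed as a quadratic form with values in $\operatorname{coker} df_p$, is well defined independently of coordinates; choosing a generator of the one-dimensional cokernel introduces exactly the sign ambiguity $Q\leftrightarrow -Q$ that you then resolve via the inequality $i(p)\le\frac{m-n+1}{2}$. Your conclusion is unchanged, but phrasing it this way makes clear that no hidden coordinate freedom has been overlooked.
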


If $i(p)=0$ holds for each singular point above, then the fold map is said to be a {\it special generic} map. As simplest examples, Morse functions on homotopy spheres with exactly two singular points are special generic. For precise information on special generic maps, see \cite{saeki2} for example.

A fold map into the plane is known to be stable if and only if the restriction to the singular set is a smooth immersion such that {\it crossings} are {\it normal}. A {\it crossing} of the immersion is a point in the image of the immersion such that the preimage has at least two points. For a smooth immersion of a $1$-dimensional manifold with no boundary into the plane, a crossing is said to be {\it normal} if the preimage consists of exactly two points and the sum of the images of the differentials at these two points coincides with the tangent vector space at the crossing.  

\section{Pseudo quotient spaces and pseudo quotient maps as generalizations of Reeb spaces.}
\label{sec:3}
For two topological spaces $X$ and $Y$ and a map $c:X \rightarrow Y$, ${\sim}_c$ denotes the relation on $X$ defined by the following rule and it is an equivalence relation: $x_1 {\sim}_c x_2$ holds if and  only if there exists $y \in Y$ and a connected component of $f^{-1}(y)$ containing both $x_1$ and $x_2$. The {\it Reeb space} $W_c$ of the map $c$ is the quotient space $X/{\sim}_c$. $q_c:X \rightarrow W_c$ denotes the quotient map and $\bar{c}:W_c \rightarrow Y$ denotes the map uniquely defined so that $c=\bar{c} \circ q_c$ holds.

We introduce fundamental facts on Reeb spaces of fold maps including special generic maps.
\begin{Prop}
\label{prop:2}
Stable fold maps are triangulable and the dimensions of Reeb spaces coincide with the dimensions of the manifolds of the targets.
\end{Prop}

This is a specific case of Fact \ref{fact:1}.

\begin{Prop}[\cite{saeki2}.]
\label{prop:3}
Let $m>n \geq 1$ be integers.
\begin{enumerate}
\item For any special generic map $f$ from an $m$-dimensional closed and connected manifold $M$ into an $n$-dimensional connected and non-compact manifold $N$ with no boundary, the Reeb space $W_f$ is an $n$-dimensional connected and compact manifold we can smoothly immerse into the manifold $N$ by $\bar{f}:W_f \rightarrow N$. 
\begin{enumerate}
\item The preimage of each regular value is a disjoint union of finitely many copies of the {\rm (}$m-n${\rm )}-dimensional unit sphere $S^{m-n}$.
\item $q_f(S(f))$ and the boundary $\partial W_f$ of $W_f$ coincide. The composition of the restriction of $q_f$ to the preimage of a small collar neighborhood $N(\partial W_f)$ with the canonical projection to $\partial W_f$ gives a linear bundle whose fiber is diffeomorphic to $D^{m-n+1}$.
\item The restriction of $q_f$ to the preimage of $W_f-{\rm Int}\ N(\partial W_f)$ gives a smooth bundle whose fiber is diffeomorphic to $S^{m-n}$. 
\end{enumerate}
\item Let $N$ be an $n$-dimensional connected and non-compact smooth manifold with no boundary.
Let $W_N$ be an $n$-dimensional compact and connected smooth manifold and ${\bar{f}}_N:W_N \rightarrow N$ a smooth immersion. We can construct a special generic map $f^{\prime}:M^{\prime} \rightarrow N$ on a suitable $m$-dimensional closed and connected manifold satisfying $W_N=W_{f^{\prime}}$ and $\bar{f^{\prime}}={\bar{f}}_N$. 
\end{enumerate}
\end{Prop}
\begin{Def}
\label{def:2}
Let $\mathcal{C}$ be a class of triangulable smooth maps.
Let $W_{q}$ be an $n$-dimensional polyhedron. 
Let $f_q:M \rightarrow W_{q}$ be a surjective triangulable map with respect to the pair of canonical triangulations of $M$ and $W_{q}$. $W_{q}$ is said to be a {\it pseudo quotient space} of the class $\mathcal{C}$ and $f_q$ is said to be a {\it pseudo quotient map} of the class if for each point $p \in W_{q}$ and the interior of some small
connected and closed neighborhood $N(p)$ being $n$-dimensional and satisfying $p \in {\rm Int} N(p)$, 
there exist the following objects.
\begin{enumerate}
\item A triangulable smooth map $f_{M,p}:M_p \rightarrow N_p$ of the class $\mathcal{C}$ from an $m$-dimensional closed manifold $M_p$ into an $n$-dimensional manifold $N_p$ with no boundary.
\item A point $p^{\prime} \in W_{f_{M,p}}$.
\item A small connected and closed neighborhood $N({p}^{\prime}) \subset W_{f_{M,p}}$, whose interior contains $p^{\prime} \in W_{f_{M,p}}$.
\item A pair of a diffeomorphism ${\Phi}_p$ and a PL homeomorphism ${\phi}_{p^{\prime}}$ such that 
for the maps $f_{q,N(p)}:=f_q {\mid}_{{f_q}^{-1}(N(p))}:{f_q}^{-1}(N(p)) \rightarrow N(p)$ and $q_{f,N({p}^{\prime})}:=q_{f_{M,p}} {\mid}_{{q_{f_{M,p}}}^{-1}(N({p}^{\prime}))}:{q_{f_{M,p}}}^{-1}(N({p}^{\prime})) \rightarrow N({p}^{\prime})$, 
 the relation $q_{f,N({p}^{\prime})} \circ {\Phi}_p ={\phi}_{p^{\prime}} \circ f_{q,N(p)}$ holds.
\end{enumerate} 
\end{Def}

We can define notions such as a {\it singular point}, a {\it regular value}, a {\it singular value}, the {\it singular set}, the {\it singular value set} and the {\it regular value set} for the pseudo quotient map in a natural way. 
\begin{Rem}
Kobayashi and Saeki introduced {\it pseudo quotient maps} and {\it pseudo quotient spaces} as specific versions of ones here in \cite{kobayashisaeki}. They set $\mathcal{C}$ as the class of stable maps into the plane on closed manifolds whose dimensions are greater than $2$. They studied algebraic topological and differential topological properties of stable maps into the plane including transformations of maps preserving the topologies and the differentiable structures of the closed manifolds. 
\end{Rem}

\begin{Ex}
\label{ex:1}
\begin{enumerate}
\item
\label{ex:1.1}
In Proposition \ref{prop:3}, for a suitable case, there exists another compact and smooth manifold $M_W$ whose
boundary is the original manifold $M$ of the domain of the special generic map $f$ and which collapses to the compact manifold $W_f$ or the Reeb space. Furthemore, we can obtain the manifold bounded by the original manifold so that each connected component of each preimage of each regular value is bounded by a copy of a unit disk embedded smoothly and properly in this new compact manifold and that the disks bounded by distinct connected
components of preimages of regular values do not intersect. The constructed compact manifold $M_W$ is the total space of a smooth bundle over the given compact manifold $W_f$ whose fiber is diffeomorphic to the {\rm (}$m-n${\rm )}-dimensional unit disk. We can take a smooth map $r:M_W \rightarrow W_f$ giving a collapsing, making $M_W$ the smooth bundle over $W_f$ whose fiber is diffeomorphic to $D^{m-n+1}$ and satisfying $r {\mid}_{\partial M_W}=q_f:\partial M_W=M \rightarrow W_f$.
 
We can construct the compact manifold whose
boundary is the original manifold $M$ in the topology category and the PL or piecewise-smooth category for any case.
If we need to argue in the PL category, then the bundle $M_W$ is considered as a PL bundle. 
If the pseudo quotient map of this class is a pseudo quotient map of the class of special generic maps such that the pairs $(m,n)$ of the dimensions of the domains and the targets satisfy $m-n=1,2,3$, then we can argue in the smooth category as before. For this condition on the dimensions see \cite{saeki2} for example.
\item
\label{ex:1.2}
A {\it shadow} $P$ of a $3$-dimensional closed orientable manifold $M$, introduced by Turaev, is regarded as a pseudo quotient space of the class of stable fold maps on $3$-dimensional closed and orientable manifolds into the plane. It is a $2$-dimensional polyhedron
and locally (the canonically defined quotient map of) stable fold maps into the plane
with no connected components of preimages containing two singular points of a certain type. Such a connected component is called a {\it {II}$^3$ type fiber}. 
Consult \cite{saeki2} and \cite{saeki3} for example.

Moreover, a shadow is divided into finitely many connected compact surfaces regarded as the base spaces of canonically defined smooth bundles with fibers diffeomorphic to a circle and consisting
of regular values, by a small regular neighborhood of the singular value set. For the finitely many connected compact surfaces regarded as the base spaces of the bundles and consisting
of regular values, we can assign a {\it gleam} or a rational
number represented as the product of $\frac{1}{2}$ and an integer
as information on the identification on the boundaries or the restriction of the total space of the bundle with fibers diffeomorphic to a circle to the boundary. As the previous example,
there exists a compact and smooth manifold $M_W$ whose
boundary is the original manifold $M$ and which collapses to a $2$-dimensional polyhedron $P^{\prime}$ simple homotopy equivalent to the original $2$-dimensional polyhedron $P$. Furthermore, we can obtain this manifold so that each connected component of each
preimage of each regular value bounds a copy of the $2$-dimensional unit disk embedded properly and smoothly in the compact manifold and that the disks bounded by distinct connected
components of preimages of regular values do not intersect. As in (\ref{ex:1.1}) we can also take a smooth map $r_{M,P}:M_W \rightarrow P^{\prime}$ giving a collapsing.

Note that the two homotopy equivalent $2$-dimensional polyhedra $P$ and $P^{\prime}$ are PL homeomorphic if the class of stable fold maps on closed and orientable $3$-dimensional manifolds into the plane is restricted to the class of ones such that the restrictions of the quotient maps onto the Reeb spaces to the singular sets are injective. This class is defined in Definition \ref{def:4}.

If for a point in the polyhedron of the target and a local stable fold map in Definition \ref{def:2}, we need a connected component of a preimage containing two singular points, then we can take a PL map $r_P:P^{\prime} \rightarrow P$ so that on a suitable $2$-dimensional subpolyhedron, this is a PL homeomorphism onto the complementary set of the interior of a suitable small regular neighborhood of the set of all points satisfying this property on the preimages in $P^{\prime}$. We can also obtain $r_{M,P}:M_W \rightarrow P^{\prime}$ and $r_P$ so that the restriction of $r_P \circ r_{M,P}:M_W \rightarrow P$ to the original manifold $M$ is the original pseudo quotient map. For the map $r_P \circ r_{M,P}$, the preimages of compact submanifolds in the regular value set of the original map are $4$-dimensional compact submanifolds of the constructed compact manifold $M_W$. This map makes the $4$-dimensional compact submanifolds the total spaces of smooth bundles over the submanifolds in $P$ whose fibers are diffeomorphic to unit disks. If we need to argue in the PL or the piecewise smooth category, then bundles are considered as PL bundles.

For more precise studies on geometric theory of shadows, see \cite{costantino}, \cite{costantinothurston}, \cite{ishikawakoda} and \cite{turaev} for example. For the compact manifold $M_W$ bounded by the original manifold $M$ obtained by observing the map and polyhedra playing same roles as $P$ and $P^{\prime}$ play before,
see also \cite{saeki3} and \cite{saekisuzuoka} for example. In these two articles similar situations are considered and similar results are discussed under the situations that the pairs of dimensions are general. In these discussions, to obtain the compact and smooth manifold $M_W$ whose boundary is the original manifold $M$ of the domain of the given smooth map and argue in the smooth category, we need to assume the pair of the dimensions of the domain and the target as $(2,1), (3,1), (4,1), (3,2)$ and $(4,2)$ for example. In cases where the dimensions of manifolds of the domains and the targets are general, we can argue in the PL category and the topological category. See \cite{kitazawa2}, concentrating on {\it simple-standard-spherical} fold maps in Definition \ref{def:4} later on this argument, for example.

We will give a related exposition for Theorem \ref{thm:2.2} later.
\end{enumerate}
\end{Ex}

A closed interval $I$ smoothly embedded in a manifold $Y$ with no boundary is said to be {\it generic} with respect to the image of a smooth immersion $i:X \rightarrow Y$ of codimension $1$ of a manifold with no boundary if the following properties hold.
\begin{enumerate}
\item The closed interval $I$ contains finitely many points of the image of the immersion $i$ and these points are in the interior of the interval.
\item For each point $p \in {\rm Int}\ I$ as before, for each point $q$ of the preimage $i^{-1}(p)$ of the immersion, the sum of the tangent vector space of $I$ at $p \in I$ and the image of the differential of $i$ at $q$ coincides with the tangent vector space of $Y$ at $p \in Y$.
\end{enumerate}
FIGURE \ref{fig:1} and FIGURE \ref{fig:2} show the preimage of a generic closed interval with respect to the singular value set of a stable fold map and containing exactly one singular value in the interior
in Example \ref{ex:1} (\ref{ex:1.2}). Dotted segments are for generic curves and black lines are for the singular value sets.
\begin{figure}
\includegraphics[width=30mm]{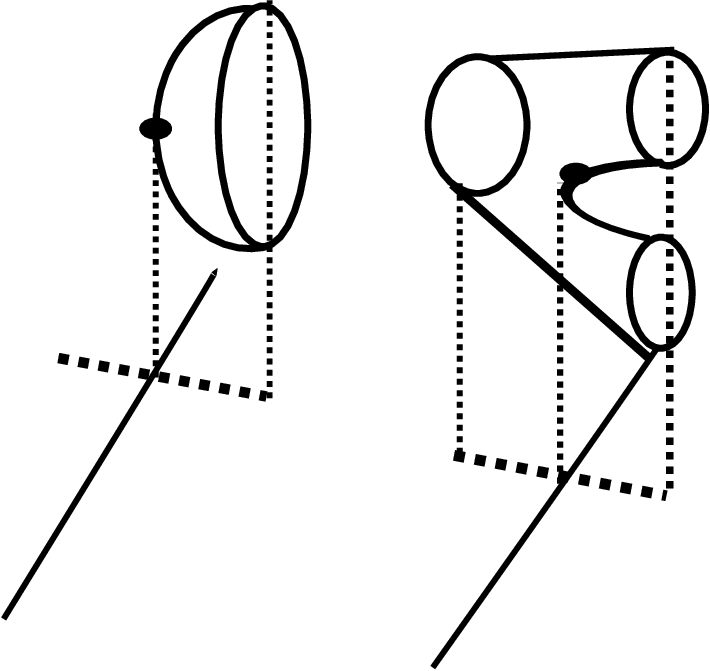}
\caption{The preimage of a sufficiently small generic closed interval around a singular value whose preimage is connected and contains exactly one singular point in Example \ref{ex:1} (\ref{ex:1.2}).}
\label{fig:1}
\end{figure}
\begin{figure}
\includegraphics[width=30mm]{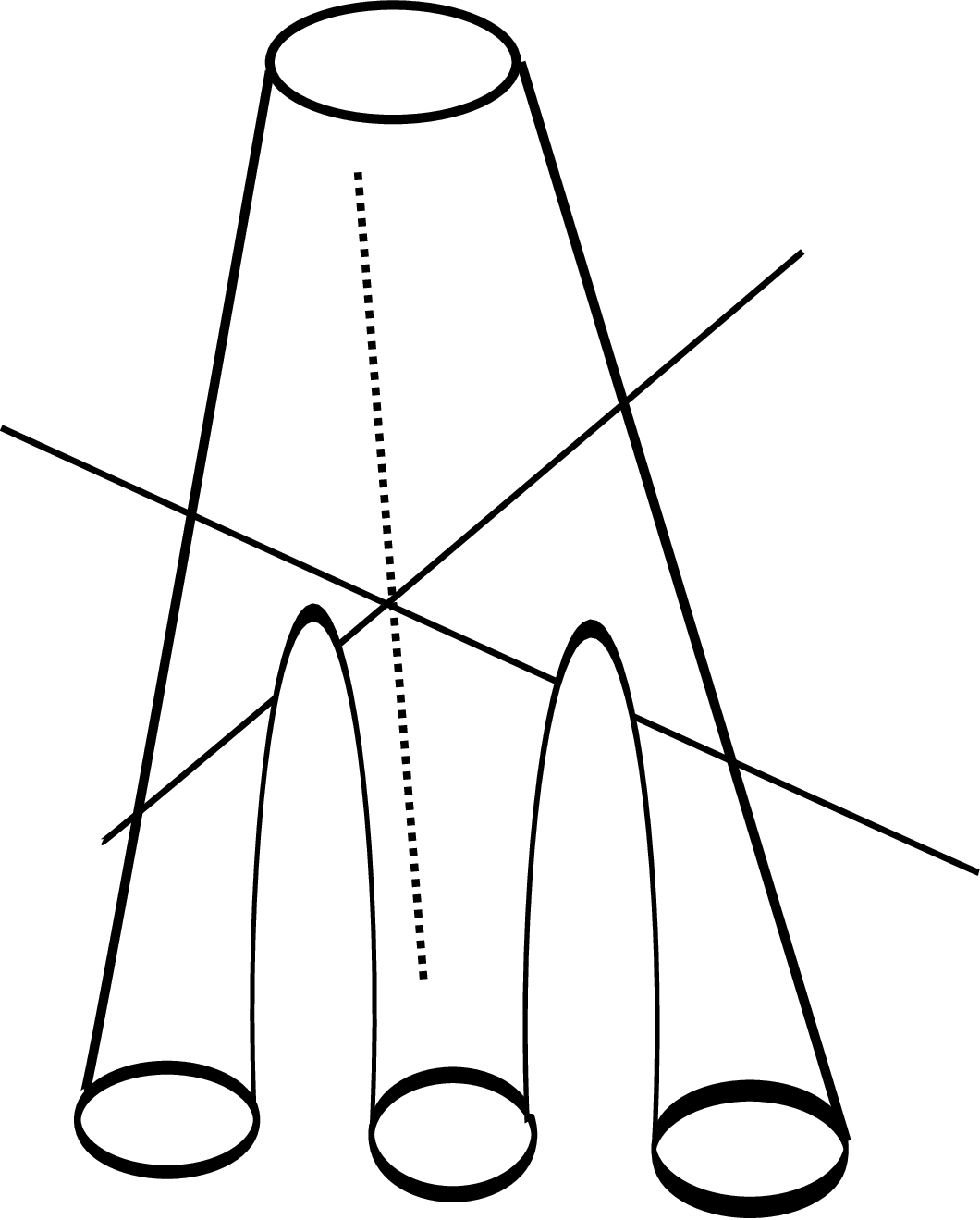}
\caption{The preimage of a sufficiently small generic closed interval around a singular value whose preimage is connected and contains exactly two singular points in Example \ref{ex:1} (\ref{ex:1.2}).}
\label{fig:2}
\end{figure}

\section{Main Theorems --Manifolds with additional structures, induced structures on preimages of regular values and top-dimensional homology groups of Reeb spaces or pseudo quotient spaces--.}
\label{sec:4}
We present main theorems. 

Before that, we review and introduce notions on homology classes and orientations of PL or smooth manifolds.

A {\it PID} means a so-called {\it principal ideal domain} having the unique identity element $1$ different from the zero element $0$. A closed and connected PL or smooth manifold $X$ is known to be orientable if $H_{\dim X}(X;A)$ is isomorphic to $A$ for any PID. 

If a PL or smooth manifold $X$ is orientable and oriented, then each simplex of degree $\dim X$ is oriented in a canonical way. For the oriented manifold $X$, the sum of all these oriented simplices is taken (in the chain complex whose coefficient ring is $A$). If the manifold $X$ is also closed and connected, then the homology class the sum represents is said to be the {\it fundamental class} of the (oriented) manifold $X$.
For a PL (smooth) manifold $X$, a class $c \in H_k(X;A)$ is {\it represented} by a PL (resp. smooth) closed submanifold $Y \subset {\rm Int}\ X \subset X$ with no boundary which is also orientable if the value of the homomorphism induced canonically from the inclusion $i:Y \rightarrow X$ at the sum of suitable fundamental classes of all connected components of $Y$ is $c$.

Note that if any element $a$ of $A$ $a+a$ is zero, then we do not need orientations in the argument.

For related systematic topological theory on polyhedra and simplicical complexes and more general theory on algebraic topology, see \cite{hatcher} for example.  

As an extension of Fact \ref{fact:3}, we can show the following result. \cite{kitazawa3} concentrate on cases where preimages of regular values have only differentiable structures and no further structures.
\begin{Thm}
\label{thm:1}
Let $m>n$ be positive integers and let $M$ be a closed and smooth manifold.
Let $R$ be a PID and $A$ be a module over $R$.
Let $W_q$ be an $n$-dimensional polyhedron. Let $\tau \in H^{m-n}(M;A)$.
Let $f_q:M \rightarrow W_q$ be a pseudo quotient map of a class $\mathcal{C}$.
Assume that there exists a connected component of the preimage of a regular value where the pull-back of $\tau$ does not vanish.
In addition, assume the following two conditions in the case there exists an element $a$ of $A$ satisfying $2a \neq 0$.
\begin{enumerate}
\item $M$ is oriented.
\item For a triangulation giving $W_q$ the structure of the polyhedron in this situation, all simplices of degree $n$ in $W_q$ can be oriented satisfying the following conditions.
\begin{enumerate}
\item For each triangulable smooth map $f_{M,p}:M_p \rightarrow N_p$ in Definition \ref{def:2}, $M_p$ is taken as a closed, connected and oriented manifold and 
$N_p$ is taken as a connected and oriented manifold with no boundary. All simplices of degree $n$ in $W_{f_{M,p}}$ are oriented canonically by the map $\bar{f_{M,p}}:W_{f_{M,p}} \rightarrow N_p$.
\item Furthermore, we can take these maps and the oriented manifolds of the domains and the targets so that they restore the orientations of $M$ and all simplices of $W_q$ of degree $n$ via suitable diffeomorphisms ${\Phi}_p$ and PL homeomorphisms ${\phi}_{p^{\prime}}$ in Definition \ref{def:2}.
\end{enumerate}
\end{enumerate}
Then, the homology group $H_n(W_q;A)$ is not zero.
\end{Thm}
\begin{proof}

The proof is essentially due to the proofs of original theorems in \cite{hiratukasaeki}, \cite{hiratukasaeki2} and \cite{kitazawa3}. Let us construct a non-trivial top-dimensional cycle.

For any simplex $\sigma$ of $W_q$ of degree $n$, we can correspond a (an oriented) cobordism class uniquely by taking the class the preimage of a regular value belongs to. We can take the pull-back ${\tau}_{\sigma}$ of $\tau$ to the preimage and
the value at the fundamental class $f_{\sigma}$ (induced from suitable orientations of $M$ and simplices of $W_q$ of degree $n$ in the additional conditions if they are oriented). The resulting value is defined uniquely by the uniqueness of the cobordism class or the fact that for two different regular values in a simplex, the preimages are (oriented) cobordant in $M$. These arguments are are also in the proof of Lemma 3.1 of \cite{hiratukasaeki2} for example.

The value is zero if the simplex is not in the interior of $W_q$ or a face of the simplex is in the boundary and the sum ${\sum}_{\sigma} {\tau}_{\sigma}(f_{\sigma})\sigma$ for all simplices of $W_q$ of degree $n$ is a cycle. For the latter discussion, the key ingredients are a discussion in the original paper
and the fact that for each face ${\sigma}^{\prime}$ of degree $n-1$, the family $\{{\sigma}_{{\sigma}^{\prime},\lambda}\}_{\lambda}$ of all simplices of degree $n$ containing the face as a face in common and the disjoint union ${\sqcup}_{\lambda} {f_q}^{-1}(p_{\lambda})$ is a submanifold in $M$ and (resp. oriented) null-cobordant in $M$ where $p_{\lambda}$ denotes a regular value in the interior of ${\sigma}_{{\sigma}^{\prime},\lambda}$. Note that the additional conditions on orientations of $M$ and simplices of $W_q$ of degree $n$ yields the latter fact. These additional conditions mean that orientations of simplices of $W_q$ of degree $n$ are locally induced from a triangulable smooth map of the class $\mathcal{C}$.

The assumption that there exists a connected component of the preimage of a regular value where the pull-back of $\tau$ does not vanish implies that the obtained cycle is not zero. The cycle is also of degree $n=\dim W_q$. This completes the proof.

\end{proof}
\begin{Rem}
If $f_q:M \rightarrow W_q$ is the quotient map $q_f:M \rightarrow W_f$ of a triangulable smooth map $f:M \rightarrow N$ of the class $\mathcal{C}$ and $M$ and $N$ are oriented, then the conditions on the triangulation of $W_q=W_f$ are satisfied. 
\end{Rem}
\begin{Def}
\label{def:3}
In Theorem \ref{thm:1}, if the two conditions are satisfied in the case there exists an element $a$ of $A$ satisfying $2a \neq 0$, then we say that $W_q$ {\it can be oriented respecting $(f_q,\mathcal{C})$}.
\end{Def}

We show explicit applications.

Before that, we introduce fundamental notions on (smooth) embeddings.

We consider an embedding of a compact manifold $X$ into another manifold $Y$. $X$ is said to be {\it embedded properly} in $Y$ if the boundary of $X$ is mapped into the boundary of $Y$ and the interior of $X$ is mapped into the interior of $Y$ via the embedding. If a compact smooth manifold $X$ is embedded properly and smoothly into another smooth manifold $Y$, then we have a normal vector bundle ${\nu}_X$ over $X$ of dimension $\dim Y-\dim X$ and the restriction ${TY} {\mid}_X$ of $TY$ is regarded as a so-called {\it Whitney sum} of $TX$ and ${\nu}_X$.

Hereafter, we need notions of {\it spin structures} and {\it spin$^c$ structures} of compact and orientable smooth manifold.
We will avoid rigorous theory on these structures including several rigorous definitions.

First, a {\it spin} manifold is a compact and orientable smooth manifold whose {\it 2nd Stiefel-Whitney class} vanishes. This is a cohomology class of the manifold of degree $2$ whose coefficient ring is $\mathbb{Z}/2\mathbb{Z}$.
For a spin manifold $X$, {\it spin structures} are parametrized by the set of all cohomology classes of the manifold of degree $1$ whose coefficient ring is $\mathbb{Z}/2\mathbb{Z}$. They are defined as suitable equivalence classes of sequences of independent sections of the restriction of the tangent bundle or the Whitney sum of this and a trivial real vector bundle of dimension $1$ to the $1$-dimensional skeleton. Of course the simplicial structure is one giving the structure of the polyhedron of the differentiable manifold $X$. Furthermore, the length of each sequence and the dimension of the real vector bundle coincide.

We can define the spin structure {\it corresponding to a class of $H^1(X;\mathbb{Z}/2\mathbb{Z})$}.

We can generalize these notions for simplicial complexes and more generally CW complexes and real vector bundles over them. For a CW complex $X$ and an orientable real vector bundle whose dimension is greater than $0$, {\it spin structures} are defined as suitable equivalence classes of sequences of independent sections of the restrictions of the bundle or the Whitney sum of this and a real trivial vector bundle of dimension $1$ over $X$ to the $1$-dimensional skeletons. 

For a compact, oriented and smooth manifold $X$ embedded smoothly in a spin manifold $Y$ 
and the spin structure corresponding to $c_Y \in H^1(Y;\mathbb{Z}/2\mathbb{Z})$, assume that we can take a
trivial normal bundle of $X$ of dimension $\dim Y-\dim X$. In this situation, we can induce the spin structure corresponding to the pull-back of $c_Y$ on $X$.
For a compact and oriented manifold $X$ endowed with the spin structure corresponding to $c_X \in H^1(X;\mathbb{Z}/2\mathbb{Z})$ embedded smoothly in a spin manifold $Y$,
assume that we can take a
trivial normal bundle of $X$ of dimension $\dim Y-\dim X$.
If for each element in the kernel of the homomorphism from $H_1(X;\mathbb{Z}/2\mathbb{Z})$ into $H_1(Y;\mathbb{Z}/2\mathbb{Z})$ induced from the inclusion, the class $c_X$ vanishes and the homomorphism is surjective, then we can induce the spin structure in a unique way on $Y$. We can define the spin structure on $Y$ so that we can induce and restore $c_X$ by the pull-back.

A {\it spin$^c$} manifold is a compact, orientable and smooth manifold whose 2nd Stiefel-Whitney class is in the image of the canonical homomorphism from the integral 2nd cohomology group of the manifold into the 2nd cohomology group of the manifold whose coefficient ring is $\mathbb{Z}/2\mathbb{Z}$.
For a spin$^c$ manifold $X$, {\it spin$^c$ structures} are parametrized by the set of all cohomology classes of the manifold of degree $2$ whose coefficient ring is $\mathbb{Z}$. They are defined as suitable equivalence classes of pairs of $1$-dimensional complex vector bundles over the manifold $X$ and spin structures on the Whitney sums of the tangent bundle and the chosen complex vector bundles. We can define the spin$^c$ structure {\it corresponding to a class of $H^2(X;\mathbb{Z})$}.

For a compact, oriented and smooth manifold $X$ embedded smoothly in a spin$^c$ manifold $Y$ and the spin$^c$ structure corresponding to $c_Y \in H^2(Y;\mathbb{Z})$, assume that we can take a
trivial normal bundle of $X$ of dimension $\dim Y-\dim X$. In this situation, we can induce the spin$^c$ structure corresponding to the pull-back of $c_Y$ on $X$.
For a compact, oriented and smooth manifold $X$ endowed with the spin$^c$ structure corresponding to $c_X \in H^2(X;\mathbb{Z})$ embedded smoothly in a spin$^c$ manifold $Y$,
assume that we can take a
trivial normal bundle of $X$ of dimension $\dim Y-\dim X$.
Assume the following conditions in addition.
\begin{enumerate}
\item $H_1(X;\mathbb{Z})$ is free.
\item The homomorphism from $H_1(X;\mathbb{Z})$ into $H_1(Y;\mathbb{Z})$ induced from the inclusion is surjective.
\item For each element in the kernel of the homomorphism from $H_2(X;\mathbb{Z})$ into $H_2(Y;\mathbb{Z})$ induced from the inclusion, the class $c_X$ vanishes, and this homomorphism is surjective.
\end{enumerate}
We can induce the spin$^c$ structure in a unique way on $Y$. We can define the spin$^c$ structure on $Y$ so that we can induce and restore $c_X$ by the pull-back.

For more precise explanations on spin and spin$^c$ structures, see \cite{lawsonmichelson} for example. Through the proof of the following corollary, we present the spin structure of $S^1$ corresponding to the zero class and the spin$^c$ structure of a closed, connected and orientable surface corresponding to the zero class. We implicitly apply the argument before in this
and Theorems \ref{thm:2.1} and \ref{thm:2.2}.

A {\it $3$-dimensional handlebody} means a compact and connected smooth manifold represented as a boundary connected sum of finitely many copies of a manifold diffeomorphic to $S^2 \times D^1$.
\begin{Cor}
\label{cor:2}
Let $m>n$ be positive integers and let $M$ be an $m$-dimensional closed, orientable and smooth manifold.
Let $W_q$ be an $n$-dimensional pseudo quotient space of a class $\mathcal{C}$. Let $f_q:M \rightarrow W_q$ be a pseudo quotient map of a class $\mathcal{C}$.
Then the following two properties hold.
\begin{enumerate}
\item
\label{cor:2.1}
Let $m-n=1$ and $M$ be a manifold endowed with a spin structure. Assume that $H_n(W_q;\mathbb{Z}/2\mathbb{Z})$ is zero.
If we restrict the spin structure to each connected component of each preimage of each regular value, then the
obtained spin manifold diffeomorphic to the circle bounds a copy of $D^2$ as a spin manifold.
\item
\label{cor:2.2}
Let $m-n=2$ and $M$ be a manifold endowed with a spin$^c$ structure. Assume that $H_n(W_q;\mathbb{Z})$ is zero. 
Assume also that $W_q$ can be oriented respecting $(f_q,\mathcal{C})$. If we restrict the spin$^c$ structure
to each connected component of each preimage of each regular value, then the
obtained spin$^c$ manifold diffeomorphic to a closed and connected orientable manifold bounds a $3$-dimensional handlebody as a spin$^c$ manifold.
\end{enumerate}
\end{Cor}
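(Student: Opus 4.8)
The plan is to read both statements as instances of the cycle construction underlying Theorem \ref{thm:1}, used in its contrapositive form, once the coefficient data are chosen to record the spin, respectively spin$^c$, type of the inverse images. The step common to both parts is to note that for a regular value $a$ the inverse image ${f_q}^{-1}(a)$ is a genuine smooth closed submanifold of $M$ of dimension $m-n$ whose normal bundle is trivial, being the pull-back of the tangent space of $W_q$ at a smooth point. Hence the spin (resp. spin$^c$) structure of $M$ restricts, through this trivialization of the normal bundle and the splitting $TM|_{F}=TF\oplus\nu$, to a spin (resp. spin$^c$) structure on each connected component $F$ of the fibre; this is the induced structure of the statement, and it is a cobordism invariant of the fibre because the cobordisms inside $M$ between two fibres (those of Lemma 3.1 of \cite{hiratukasaeki2} invoked in the proof of Theorem \ref{thm:1}) are themselves submanifolds of $M$ with trivial normal bundle and inherit the structure compatibly.

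For part (\ref{cor:2.2}) I would apply Theorem \ref{thm:1} directly with $R=A=\mathbb{Z}$ and $\tau=c_1\in H^{2}(M;\mathbb{Z})$, the first Chern class of the determinant line bundle of the spin$^c$ structure of $M$. For a connected orientable fibre surface $\Sigma$ one has $w_2(\Sigma)=0$, so $\langle c_1|_{\Sigma},[\Sigma]\rangle$ is even and equals twice the class of $\Sigma$ in ${\Omega}_2^{{\rm Spin}^{c}}\cong\mathbb{Z}$; in particular this value vanishes if and only if $\Sigma$ is null-cobordant as a spin$^c$ manifold. By the contrapositive of Theorem \ref{thm:1}, the assumed vanishing of $H_n(W_q;\mathbb{Z})$ forces $\langle c_1|_{\Sigma},[\Sigma]\rangle=0$, whence $c_1|_{\Sigma}=0$ since $H^{2}(\Sigma;\mathbb{Z})$ is torsion free. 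To convert this into the bounding statement, I would use that $c_1$ is injective on the set of spin$^c$ structures of $\Sigma$ (a torsor over $H^{2}(\Sigma;\mathbb{Z})\cong\mathbb{Z}$ on which $c_1$ shifts by twice the torsor parameter), so there is a unique spin$^c$ structure on $\Sigma$ with $c_1=0$. Any handlebody $H$ with $\partial H=\Sigma$ is a parallelizable oriented $3$-manifold with $H^{2}(H;\mathbb{Z})=0$, hence carries a spin$^c$ structure, necessarily with $c_1=0$, whose restriction to $\Sigma$ is again the unique $c_1=0$ structure; thus the induced structure on $\Sigma$ bounds $H$ as a spin$^c$ manifold.

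Part (\ref{cor:2.1}) I would treat by the same construction carried out with coefficient group ${\Omega}_1^{{\rm Spin}}\cong\mathbb{Z}/2\mathbb{Z}$, that is, the spin-cobordism refinement of the method in the spirit of Fact \ref{fact:3} and \cite{kitazawa3}, rather than through a single cohomology class, since the induced spin bordism class of a circle is not recorded by evaluating a fixed element of $H^{1}(M;\mathbb{Z}/2\mathbb{Z})$. As $A=\mathbb{Z}/2\mathbb{Z}$ satisfies $2a=0$, the orientation hypothesis of Theorem \ref{thm:1} is vacuous. The vanishing of $H_n(W_q;\mathbb{Z}/2\mathbb{Z})$ then forces every circle fibre to be null-cobordant in ${\Omega}_1^{{\rm Spin}}\cong\mathbb{Z}/2\mathbb{Z}$, and a spin circle is null-cobordant precisely when it carries the bounding spin structure, which is by definition the one that extends over $D^{2}$.

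The main obstacle I expect is the passage from null-cobordance to the specific, controlled bounding object in part (\ref{cor:2.2}): spin$^c$-null-cobordance a priori only supplies some bounding spin$^c$ $3$-manifold, and the point is to replace it by a handlebody. This is exactly what the uniqueness of the $c_1=0$ spin$^c$ structure on a surface makes available, so the crux is to verify the injectivity of $c_1$ and to match the induced structure of $\Sigma$ with the restriction from $H$. A secondary technical point, also to be ensured, is the orientability of the pseudo quotient space $W_q$, which Theorem \ref{thm:1} requires in order to use $\mathbb{Z}$ coefficients in part (\ref{cor:2.2}); the orientation of $M$ is already provided by its spin$^c$ structure.
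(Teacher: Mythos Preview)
Your proposal is correct, and for part~(\ref{cor:2.2}) it is essentially the paper's argument carried out with the explicit choice $\tau=c_1$: the paper simply says that spin$^c$ structures are ``parametrized by'' $H^2(M;\mathbb{Z})$ and that the restriction being zero forces the surface to bound a handlebody, while you identify the relevant class as the first Chern class and spell out the uniqueness of the $c_1=0$ spin$^c$ structure on an orientable surface and its match with the boundary restriction from a handlebody. Your flagging of the orientability of $W_q$ as a needed side hypothesis for the $\mathbb{Z}$--coefficient use of Theorem~\ref{thm:1} is a point the paper leaves implicit.

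For part~(\ref{cor:2.1}) you take a genuinely different route. The paper applies Theorem~\ref{thm:1} with a class $\tau\in H^{1}(M;\mathbb{Z}/2\mathbb{Z})$ that ``parametrizes'' the given spin structure and asserts that vanishing of its restriction to a circle fibre forces the bounding spin structure. You instead bypass Theorem~\ref{thm:1} and feed the spin cobordism class in $\Omega_1^{\mathrm{Spin}}\cong\mathbb{Z}/2\mathbb{Z}$ directly into the cycle construction underlying Fact~\ref{fact:3} and \cite{kitazawa3}. The advantage of your route is exactly the obstruction you name: spin structures form a torsor over $H^{1}(M;\mathbb{Z}/2\mathbb{Z})$ with no canonical zero, so there is no single cohomology class $\tau$ on $M$ whose pull-back to a fibre records the induced spin type (in contrast to $c_1$ in the spin$^c$ case); indeed a Hopf fibre in $S^3$ carries the Lie-group spin structure while $H^{1}(S^3;\mathbb{Z}/2\mathbb{Z})=0$. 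Your argument sidesteps this by assigning to each top simplex the spin cobordism class of its fibre, which is well defined and additive across faces for the same reasons as in the proof of Theorem~\ref{thm:1}, and then reading off the contrapositive. The paper's version is terser but leans on an identification that is not literally available; your variant makes the mechanism honest at no extra cost.
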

\begin{proof}
In the proof of the first statement, it is essential that spin structures of a closed, spin and orientable manifold are parametrized by its 1st cohomology classes where the coefficient ring is $\mathbb{Z}/2\mathbb{Z}$. Note that the restriction of the structure to each closed submanifold with no boundary determines a spin structure of the submanifold. If the restriction of the structure corresponds to the zero class and the submanifold is a circle, then the circle bounds a copy of $D^2$ as a spin manifold. 
In the proof of the second statement, it is essential that spin$^c$ structures of a closed, orientable and spin$^c$ manifold are parametrized by its 2nd cohomology classes where the coefficient ring is $\mathbb{Z}$. If the restriction of the structure corresponds to the zero class and the submanifold is a $2$-dimensional sphere, then the sphere bounds a handlebody as a spin manifold. Note that $3$-dimensional handlebodies are
represented as boundary connected sums of finitely many copies of $S^1 \times D^2$ and that the 2nd homology groups are zero for the coefficient rings $\mathbb{Z}$ and $\mathbb{Z}/2\mathbb{Z}$.

By virtue of these facts together with the assumption on the top-dimensional homology group of the Reeb space and Theorem \ref{thm:1}, we obtain both results.

\end{proof}
We can argue further. 
We formulate a class of fold maps discussed in Example \ref{ex:1}. Around each singular value, the map has the form in Figure \ref{fig:1} if the preimage is connected. If the preimage is not connected, then around connected components except one, the map gives trivial smooth bundles whose fibers are diffeomorphic to unit spheres.
\begin{Def}
\label{def:4}
We say that a fold map $f:M \rightarrow N$ from an $m$-dimensional closed manifold $M$ into an $n$-dimensional manifold $N$ with no boundary satisfying the following conditions and properties is said to be {\it simple-standard-spherical}.
\begin{enumerate}
\item $m>n$.
\item The restriction of the quotient map $q_f$ to the singular set is injective. 
\item The index of each singular point is $0$ or $1$.
\item The preimages of regular values are disjoint unions of copies of a unit sphere.
\end{enumerate}
\end{Def}
A special generic map $f$ is also a simple-standard-spherical. Pseudo quotient maps in Example {\rm \ref{ex:1} (\ref{ex:1.1})} are pseudo quotient map of the class.

\begin{Thm}
\label{thm:2.1}
Let $m$ be a positive integer.
\begin{enumerate}
\item Let $M$ be an $m$-dimensional closed, smooth and spin manifold endowed with a spin structure.
Let $M$ admit a pseudo quotient map $f_q$ onto $W_{q}$ of the class $\mathcal{C}$ of simple-standard-spherical fold maps such that the differences of the dimensions of the manifolds of the domains and those of the targets are $1$ and $H_{m-1}(W_{q};\mathbb{Z}/2\mathbb{Z})$ vanish {\rm (}$m>1${\rm )}.

Then, $M$ bounds a compact, smooth and spin manifold so that the spin structure induced on $M$ and the original spin structure coincide.
\item
Let $M$ be an $m$-dimensional closed, smooth and spin$^c$ manifold endowed with a spin$^c$ structure.
Let $M$ admit a pseudo quotient map $f_q$ onto $W_{q}$ of the class $\mathcal{C}$ of simple-standard-spherical fold maps such that the differences of the dimensions of the manifolds of the domains and those of the targets are $2$ and $H_{m-2}(W_{q};\mathbb{Z})$ vanish {\rm (}$m>2${\rm )}.
Assume also that $W_q$ can be oriented respecting $(f_q,\mathcal{C})$.

Then, $M$ bounds a compact, smooth and spin$^c$ manifold $M_W$ so that the spin$^c$ structure induced on $M$ and the original spin$^c$ structure coincide.
\end{enumerate}
\end{Thm}
\begin{proof}
In both cases, we will show that the ($m+1$)-dimensional compact and smooth manifold $M_W$ in Example \ref{ex:1} is a desired manifold. We set the composition of the map $r_{M,P}$ producing the collapsing with a PL homeomorphism $r_P$ onto $P:=W_q$. This give a simple homotopy equivalence. We fix a triangulation of $P=W_q$.
By several fundamental explanations on spin and spin$^c$ structures before together with Corollary \ref{cor:2}, on the preimage $M_{{\rm W},N(S_1)} \subset M_W$ of a small regular neighborhood $N(S_1)$ of the $1$-skeleton $S_1 \subset W_{q}$, which is an (m+1)-dimensional compact and smooth submanifold of $M_W$, we can give the spin or spin$^c$ structure whose restriction to $M \bigcap M_{{\rm W},N(S_1)}$ agrees with the original spin or spin$^c$ structure induced from the given structure on $M$. On the preimage $M_{{\rm W},N(S_2)} \subset M_W$ of a small regular neighborhood $N(S_2)$ of the $2$-skeleton $S_2 \subset W_{q}$ satisfying $S_2 \supset S_1$, which is an (m+1)-dimensional compact and smooth submanifold of $M_W$, we can give the spin or spin$^c$ structure whose restriction to $M \bigcap M_{{\rm W},N(S_2)}$ agrees with the original spin or spin$^c$ structure induced from the given structure on $M$. 

By the topological structures of the manifolds and the maps together with fundamental theory on spin and spin$^c$ structures, we can induce the desired spin or spin$^c$ structure on $M_W$ respecting the previous spin or spin$^c$ structure on $M_{{\rm W},N(S_1)}$, that on $M_{{\rm W},N(S_2)}$, and the original spin or spin$^c$ structure on $M$.
\end{proof}

We consider stable fold maps into the plane from $3$-dimensional closed and orientable manifolds with no {II}$^3$ type fiber in Example \ref{ex:1} (\ref{ex:1.2}) or $4$-dimensional closed and orientable manifolds.
As in Example \ref{ex:1} (\ref{ex:1.2}), the class of stable fold maps into the plane from $4$-dimensional closed manifolds (which may not be orientable) is studied in \cite{saekisuzuoka} for example.
   
For a map of these classes, remove points and the interiors of suitable small regular neighborhoods in the manifold of the target such that for local stable fold maps in
Definition \ref{def:2}, we need connected components of preimages containing two singular points. The intersection of the resulting complementary set and the regular value set is, as a result, restricted to a manifold $N_R$ PL homeomorphic to a compact surface. The intersection of the resulting complementary set and the singular value set is, as a result, restricted to a manifold $N_S$ PL homeomorphic to a disjoint union of circles or closed intervals.

As in the last of Example \ref{ex:1} (\ref{ex:1.2}), in such a case, we can naturally define a PL map $r_{M,P}$ with nice topological properties on the constructed $4$ or $5$-dimensional manifold $M_W$ inducing collapsing to a suitable subpolyhedron $P^{\prime}$ admitting a PL map $r_P$ onto the polyhedron $P$ of the target. We consider the composition $r_P \circ r_{M,P}$. The preimage of the small regular neighborhood of the intersection $N_R$ of the resulting complementary set and the regular value set 
before is a submanifold of the constructed compact manifold $M_W$. It is regarded as the total space of a bundle over the submanifold in the regular value set whose fiber is diffeomorphic to the $2$ or $3$-dimensional unit disk. The preimage of the small regular neighborhood of the intersection $N_S$ of the resulting complementary set and the singular value set 
before is a submanifold of the constructed compact manifold $M_W$. It is regarded as the total space of a bundle over the submanifold in the singular value set whose fiber is diffeomorphic to the $3$ or $4$-dimensional unit disk. If we argue in the PL or the piecewise smooth category, then the bundles are PL bundles. We can also argue in the smooth category in this case for example. In this case, the bundles are also smooth.

Furthermore, the preimages of the small regular neighborhoods of points whose preimages have connected components containing (at least) two singular points (as presented in FIGURES \ref{fig:1} and \ref{fig:2}) in the singular value sets of the original pseudo quotient maps before are compact smooth submanifolds of the constructed compact manifold $M_W$. These manifolds are diffeomorphic to the $4$ or $5$-dimensional unit disk.

The maps $r_P$ and $r_{M,P}$ can be taken so that the composition gives a simple homotopy equivalence. Such arguments are in the proof of the main theorem of \cite{saekisuzuoka} for example.

By virtue of these fundamental discussions, the proof of Theorem \ref{thm:2.1}, and Theorem \ref{thm:1}, for example, we also have the following theorem.

\begin{Thm}
\label{thm:2.2}
\begin{enumerate}
\item Let $\mathcal{C}$ be the class of all stable fold maps into the plane from $3$-dimensional closed and orientable manifolds with no ${II}^3$ type fibers. Let a $3$-dimensional closed, connected and spin manifold $M$ admit a pseudo quotient map $f_q$ onto $W_{q}$ of the class $\mathcal{C}$ and $H_{2}(W_{q};\mathbb{Z}/2\mathbb{Z})$ vanish. Then, $M$ bounds a compact, smooth and spin manifold $M_W$ as a spin manifold.
\item Let $\mathcal{C}$ be the class of all stable fold maps into the plane from $4$-dimensional closed manifolds which may not be orientable into the plane such that the preimages of regular values are disjoint unions of copies of the $2$-dimensional unit sphere. Let a $4$-dimensional closed, connected and spin$^c$ manifold $M$ admit a pseudo quotient map $f_q$ onto $W_{q}$ of the class $\mathcal{C}$ and
$H_{2}(W_{q};\mathbb{Z})$ vanish. 

Assume also that $W_{q}$ can be oriented respecting $(f_q,\mathcal{C})$.

Then, $M$ bounds a compact, smooth and spin$^c$ manifold $M_W$ as a spin$^c$ manifold. 
\end{enumerate}
\end{Thm}

We introduce a module defined from cobordisms in the {\it framed category}. For precise explanations of terminologies and fundamental properties of {\it framed manifolds} and the {\it framed category}, see \cite{milnor} for example.

Let $X$ be a compact and smooth manifold. Let $k<{\dim} X$ be a non-negative integer. For a $k$-dimensional compact and properly and smoothly embedded submanifold $S$ in $X$, let there
exist the following two objects.
\begin{enumerate}
\item A normal bundle ${\nu}_S$.
\item A sequence $\mathcal{S}$ of ${\dim} X-k$ independent sections of the bundle ${\nu}_S$ such that on the boundary $\partial S$, the values are in ${{\nu}_S} {\mid}_{\partial S} \bigcap {T\partial X} {\mid}_{\partial S}$.
\end{enumerate}
Then $(S,{\nu}_S,\mathcal{S})$ is said to be a {\it framed manifold} in $X$.
Two framed manifolds in a closed manifold $X$ is {\it cobordant} as framed manifolds in $X$ if there exists a framed manifold of $X \times [0,1]$ such that the framed manifold canonically obtained by the restriction of the manifold $X \times [0,1]$ to the boundary $X \times \{0,1\}$ is the disjoint union of the original two manifolds where one is canonically defined as a framed manifold in $X =X \times \{0\}$ and the other is canonically defined as a framed manifold in $X=X \times \{1\}$. Note that we adopt the identification of $x \in X$ with $(x,0)$ and $(x,1)$ for $x \in X$.  

Let $R$ be a PID.
Let $M$ be an $m$-dimensional closed, connected and smooth manifold. Let $k<m$ be a non-negative integer.
We define an $R$-module $\mathcal{F}_k(R,M)$ as a free module generated by all $k$-dimensional framed closed and connected manifolds in $M$ such that distinct $k$-dimensional framed closed and connected manifolds are mutually independent in the module. 
\begin{Def}
\label{def:5}
A submodule $A \subset \mathcal{F}_k(R,M)$ over $R$ is said to be {\it compatible} with the free module if for any framed compact ($k+1$)-dimensional manifold $F_0$ in $M \times [0,1]$, the sum of the two elements $F_1$ and $F_2$ obtained in the following way is an element of $A$. 
\begin{enumerate}
\item Let $F_{0,1}$ and $F_{0,2}$ be framed manifolds in $M \times \{0\}$ and $M \times \{1\}$ respectively and they are obtained by restricting $F_0$ and $M \times [0,1]$ to the boundary.
\item By identifying as $M =M \times \{0\}=M \times \{1\}$ via the identification of the three elements $(x,0)$, $(x,1)$ and $x$, We have a framed manifolds $F_1$ and $F_2$ in $M$.
\end{enumerate}
\end{Def}
\begin{Rem}
(Some) elements in $A$ in Definition \ref{def:5} are for {\it cobordism} relations of framed manifolds. 
In the case where the relation $m-k \geq \frac{1}{2}m+1$ holds,  $\mathcal{F}_k(\mathbb{Z},M)/A$ can be a well-known commutative group called degree $m-k$ {\it cohomotopy group} of $M$. The sum of this commutative group is defined by taking the disjoint union of corresponding framed manifolds after perturbing the (framed) manifolds via suitable isotopies if we need.
\end{Rem}

\begin{Thm}
\label{thm:3}
Let $m>n$ be positive integers. Let $M$ be an $m$-dimensional closed and smooth manifold.
Let $R$ be a PID and let $A$ be an $R$-module compatible with the free module $\mathcal{F}_{m-n}(R,M)$.
Let $N$ be an $n$-dimensional oriented manifold with no boundary.
Let $f:M \rightarrow N$ be a triangulable smooth map.

The preimage of a regular value can be regarded as a framed submanifold in a natural way due to the theory of \cite{milnor}. 

Furthermore, suppose that the following two additional conditions hold.
\begin{enumerate}
\item There exists a connected component $F$ of the preimage of a regular value, which is regarded as a framed manifold in the way just before.
\item The value of the natural quotient map from $\mathcal{F}_{m-n}(R,M)$ onto $\mathcal{F}_{m-n}(R,M)/A$ {\rm (}, defined by virtue of fundamental properties of framed manifolds{\rm )} at the framed manifold $F$ is not zero.
\end{enumerate}
Then $H_n(W_f;\mathcal{F}_{m-n}(R,M)/A)$ is not zero.
\end{Thm}
\begin{proof}
As the proof of Theorem \ref{thm:1}, for any simplex $\sigma$ of $W_f$ of degree $n$, we can correspond an element in ${\mathcal{F}}_{\sigma} \in \mathcal{F}_k(R,M)/A$ uniquely by taking the element uniquely defined from the preimage of a regular value. The preimage is a submanifold with no boundary and also a framed manifold of $M$. The uniqueness of the element or the class is due to the following reasons.
\begin{itemize}
\item $N$ is oriented. We can choose a basis of the tangent vector space at any point of $N$ compatible with the orientation of $N$. 
For a regular value $p \in N$ of $f$, we choose such a basis $b_p$. The preimage $F_p:=f^{-1}(p)$ is regarded as a framed manifold with a normal bundle ${\nu}_{F_p}$ and we have suitable $\dim N$ independent sections. We can do so that the total space of the normal bundle is regarded as the preimage of the interior of a small copy of the $\dim N$-dimensional unit disk embedded smoothly in $N$ and containing $p$ in the inteiror. We can find the normal bundle and the mutually independent sections so that the value obtained as the value of the composition of each of the $\dim N$ independent sections before with the differential of the restriction of $f$ to ${\nu}_{F_p}$ is uniquely defined and the corresponding element of the basis $b_p$. This is due to related arguments in \cite{milnor}.
\item For two different regular values, preimages are cobordant as framed manifolds in $M$ obtained in the manner just before. We can see this from the fundamental discussion presented in \cite{milnor} and Lemma 3.1 of \cite{hiratukasaeki2}, for example. 
\end{itemize}

The first reason presents the fact that the preimage of any regular value can be regarded as a framed submanifold in a natural way due to the theory of \cite{milnor}. 

We can know that the sum ${\sum}_{\sigma} {\mathcal{F}}_{\sigma}\sigma$ for all simplices of $W_f$ of degree $\dim Y$ is a cycle and that the homology class the cycle represents does not vanish by the two additional conditions.

This completes the proof.
\end{proof}

\begin{Ex}
In \cite{kitazawa} and \cite{kitazawa2} for example, the total space of a smooth bundle whose fiber is diffeomorphic to the $k$-dimensional unit sphere for $k \geq 1$ over the $2$-dimensional unit sphere is shown to admit a fold map into the plane satisfying the following properties.
\begin{enumerate}
\item The restriction to the singular set is an embedding.
\item The singular set is the disjoint union of two circles.
\item There exists exactly one connected component of the regular value set which is diffeomorphic to the interior of the $2$-dimensional unit disk. The preimage of a regular value in this connected component is diffeomorphic to the disjoint union of two copies of the $k$-dimensional unit sphere. Note that the regular value set consists of three connected components.
\end{enumerate}
This is also a simple-standard-spherical fold map of Definition \ref{def:4}. The Reeb space is simple homotopy equivalent to the $2$-dimensional unit sphere. Set $k=1$ and set the manifold of the domain as a copy of the $3$-dimensional unit sphere or the total space of a so-called {\it Hopf-fibration} over the $2$-dimensional sphere with the canonical orientation. 
A Hopf-fibration is a smooth bundle over $S^2$ whose fiber is a circle. In this situation, each connected component is a circle. It is also regarded as a fiber of the bundle and is not null-cobordant as a canonically induced framed manifold.    
\end{Ex}
\section{Acknowledgement and data.}
\label{sec:5}
The author is a member of JSPS KAKENHI Grant Number JP17H06128 "Innovative research of geometric topology and singularities of differentiable mappings" (Principal Investigator: Osamu Saeki) and supported by this project. We also declare that data supporting our present study are all in the present paper. 

\end{document}